\def\md{{\sf{m.d.}}}
\begin{document}

\newtheorem{thm}{Theorem}
\newtheorem{lem}[thm]{Lemma}
\newtheorem{claim}[thm]{Claim}
\newtheorem{cor}[thm]{Corollary}
\newtheorem{prop}[thm]{Proposition} 
\newtheorem{definition}[thm]{Definition}
\newtheorem{rem}[thm]{Remark} 
\newtheorem{question}[thm]{Open Question}
\newtheorem{qn}[thm]{Question}
\newtheorem{conj}[thm]{Conjecture}
\newtheorem{prob}{Problem}

\newcommand{\GL}{\operatorname{GL}}
\newcommand{\SL}{\operatorname{SL}}
\newcommand{\lcm}{\operatorname{lcm}}
\newcommand{\ord}{\operatorname{ord}}
\newcommand{\Op}{\operatorname{Op}}
\newcommand{\Tr}{\operatorname{Tr}}
\newcommand{\Nm}{\operatorname{Nm}}

\numberwithin{equation}{section}
\numberwithin{thm}{section}
\numberwithin{table}{section}

\def\vol {{\mathrm{vol\,}}}
\def\squareforqed{\hbox{\rlap{$\sqcap$}$\sqcup$}}
\def\qed{\ifmmode\squareforqed\else{\unskip\nobreak\hfil
\penalty50\hskip1em\null\nobreak\hfil\squareforqed
\parfillskip=0pt\finalhyphendemerits=0\endgraf}\fi}

\def \balpha{\bm{\alpha}}
\def \bbeta{\bm{\beta}}
\def \bgamma{\bm{\gamma}}
\def \blambda{\bm{\lambda}}
\def \bchi{\bm{\chi}}
\def \bphi{\bm{\varphi}}
\def \bpsi{\bm{\psi}}
\def \bomega{\bm{\omega}}
\def \btheta{\bm{\vartheta}}
\def \ochi{\overline{\chi}}
\def \h{\widehat h}
\def \whD{\widehat D}
\def \whX{\widehat X}

\def\ovG {\overline{\Gamma}}

\def\uu {\mathbf{u}} 
\def\uv {\mathbf{v}} %%{\underline{v}}
\def\uk {\mathbf{k}} %%{\underline{k}}
\def\ut {\mathbf{t}}

\def\eps{\varepsilon}

\newcommand{\bfxi}{{\boldsymbol{\xi}}}
\newcommand{\bfrho}{{\boldsymbol{\rho}}}

\def\Kab{\sfK_\psi(a,b)}
\def\Kuv{\sfK_\psi(u,v)}
\def\SaUV{\cS_\psi(\balpha;\cU,\cV)}
\def\SaAV{\cS_\psi(\balpha;\cA,\cV)}

\def\SUV{\cS_\psi(\cU,\cV)}
\def\SAB{\cS_\psi(\cA,\cB)}

\def\Kmnp{\sfK_p(m,n)}

\def\KKap{\cH_p(a)}
\def\KKaq{\cH_q(a)}
\def\KKmnp{\cH_p(m,n)}
\def\KKmnq{\cH_q(m,n)}

\def\Klmnp{\sfK_p(\ell, m,n)}
\def\Klmnq{\sfK_q(\ell, m,n)}

\def \SALMNq {\cS_q(\balpha;\cL,\cI,\cJ)}
\def \SALMNp {\cS_p(\balpha;\cL,\cI,\cJ)}

\def \SACXMQX {\fS(\balpha,\bzeta, \bxi; M,Q,X)}

\def\SAMJp{\cS_p(\balpha;\cM,\cJ)}
\def\SAMJq{\cS_q(\balpha;\cM,\cJ)}
\def\SAqMJq{\cS_q(\balpha_q;\cM,\cJ)}
\def\SAJq{\cS_q(\balpha;\cJ)}
\def\SAqJq{\cS_q(\balpha_q;\cJ)}
\def\SAIJp{\cS_p(\balpha;\cI,\cJ)}
\def\SAIJq{\cS_q(\balpha;\cI,\cJ)}

\def\RIJp{\cR_p(\cI,\cJ)}
\def\RIJq{\cR_q(\cI,\cJ)}

\def\TWXJp{\cT_p(\bomega;\cX,\cJ)}
\def\TWXJq{\cT_q(\bomega;\cX,\cJ)}
\def\TWpXJp{\cT_p(\bomega_p;\cX,\cJ)}
\def\TWqXJq{\cT_q(\bomega_q;\cX,\cJ)}
\def\TWJq{\cT_q(\bomega;\cJ)}
\def\TWqJq{\cT_q(\bomega_q;\cJ)}

 \def \xbar{\overline x}
  \def \ybar{\overline y}

\def\cA{{\mathcal A}}
\def\cB{{\mathcal B}}
\def\cC{{\mathcal C}}
\def\cD{{\mathcal D}}
\def\cE{{\mathcal E}}
\def\cF{{\mathcal F}}
\def\cG{{\mathcal G}}
\def\cH{{\mathcal H}}
\def\cI{{\mathcal I}}
\def\cJ{{\mathcal J}}
\def\cK{{\mathcal K}}
\def\cL{{\mathcal L}}
\def\cM{{\mathcal M}}
\def\cN{{\mathcal N}}
\def\cO{{\mathcal O}}
\def\cP{{\mathcal P}}
\def\cQ{{\mathcal Q}}
\def\cR{{\mathcal R}}
\def\cS{{\mathcal S}}
\def\cT{{\mathcal T}}
\def\cU{{\mathcal U}}
\def\cV{{\mathcal V}}
\def\cW{{\mathcal W}}
\def\cX{{\mathcal X}}
\def\cY{{\mathcal Y}}
\def\cZ{{\mathcal Z}}
\def\Ker{{\mathrm{Ker}}}
\def\g{{\mathrm{gcd}}}

\def\Psf {\mathsf P}
\def\Qsf {\mathsf Q}
\def\fsf {\mathsf f}
\def\Dsf {\mathsf D}

\def\NmQR{N(m;Q,R)}
\def\VmQR{\cV(m;Q,R)}

\def\Xm{\cX_m}

\def \A {{\mathbb A}}
\def \B {{\mathbb A}}
\def \C {{\mathbb C}}
\def \N {{\mathbb N}}
\def \F {{\mathbb F}}
\def \G {{\mathbb G}}
\def \L {{\mathbb L}}
\def \K {{\mathbb K}}
\def \PP {{\mathbb P}}
\def \Q {{\mathbb Q}}
\def \R {{\mathbb R}}
\def \Z {{\mathbb Z}}
\def \fS{\mathfrak S}

\def\e{{\mathbf{\,e}}}
\def\ep{{\mathbf{\,e}}_p}
\def\eq{{\mathbf{\,e}}_q}
\def\er{{\mathbf{\,e}}_R}
\def\\{\cr}
\def\({\left(}
\def\){\right)}
\def\fl#1{\left\lfloor#1\right\rfloor}
\def\rf#1{\left\lceil#1\right\rceil}

\def\Tr{{\mathrm{Tr}}}
\def\Nm{{\mathrm{Nm}}}
\def\Im{{\mathrm{Im}}}

\def \oF {\overline \F}

\newcommand{\pfrac}[2]{{\left(\frac{#1}{#2}\right)}}

\def \Prob{{\mathrm {}}}
\def\e{\mathbf{e}}
\def\ep{{\mathbf{\,e}}_p}
\def\epp{{\mathbf{\,e}}_{p^2}}
\def\em{{\mathbf{\,e}}_m}

\def\Res{\mathrm{Res}}
\def\Orb{\mathrm{Orb}}

\def\vec#1{\mathbf{#1}}
\def \va{\vec{a}}
\def \vb{\vec{b}}
\def \vm{\vec{m}}
\def \vu{\vec{u}}
\def \vv{\vec{v}}
\def \vx{\vec{x}}
\def \vy{\vec{y}}
\def \vz{\vec{z}}
\def\flp#1{{\left\langle#1\right\rangle}_p}
\def\T {\mathsf {T}}

\def\sfG {\mathsf {G}}
\def\sfK {\mathsf {K}}

\def\mand{\qquad\mbox{and}\qquad}

\title[Multiplicative dependence on  elliptic curves]
{Multiplicative dependence in the denominators of points of elliptic curves}

\author[A. B\' erczes] {Attila B\'erczes}
\address{Institute of Mathematics, University of Debrecen, P. O. Box 400, H-4002 Debrecen, Hungary}
\email{berczesa@science.unideb.hu}

\author[S. Bhakta]{Subham Bhakta}
\address{School of Mathematics and Statistics, University of New South Wales, Sydney, NSW 2052, Australia.} 
\email{subham.bhakta@unsw.edu.au}

\author[L.  Hajdu] {Lajos Hajdu}
\address{Count Istv{\'a}n Tisza Foundation, Institute of Mathematics, University of Debrecen, 
  P. O. Box 400, H-4002 Debrecen, Hungary, and 
HUN-REN-DE Equations, Functions, Curves
and their Applications Rese\-arch Group}
\email{hajdul@science.unideb.hu}

\author[A. Ostafe]{Alina Ostafe}
\address{School of Mathematics and Statistics, University of New South Wales, Sydney, NSW 2052, Australia.} 
\email{alina.ostafe@unsw.edu.au}

\author[I. E. Shparlinski]{Igor E. Shparlinski}
\address{School of Mathematics and Statistics, University of New South Wales, Sydney, NSW 2052, Australia.} 
\email{igor.shparlinski@unsw.edu.au}

 \dedicatory{Dedicated to the 75th birthday of Attila Peth{\H o}} 

\begin{abstract}
Let $E_1, \ldots, E_s $  be $s$, not necessary distinct, elliptic curves over $\Q$. We give upper bounds on the frequency of $s$-tuples of points in $E_1(\Q)\times \ldots \times E_s(\Q)$ whose denominators
 or $x$-coordinates are multiplicatively dependent. More precisely, we give such bounds in two scenarios: one in which we fix $s$ non-torsion  $\Q$-rational points $P_i \in E_i(\Q)$ and 
arbitrary  $\Q$-rational points $Q_i \in E_i(\Q)$,  $i =1, \ldots, s$, and we count $s$-tuples 
\[
(n_1P_1+Q_1,\ldots, n_sP_s+Q_s) \in  E_1(\Q) \times \ldots \times E_s(\Q)
\]
with $n_1, \ldots, n_s$ in an arbitrary interval of length $N$, and the second in which we count points $(P_1,\ldots,P_s) \in E_1(\Q) \times \ldots \times E_s(\Q)$ of bounded canonical height.
%
%Given  $s$ non-torsion  $\Q$-rational points $P_i \in E_i(\Q)$ and 
%arbitrary  $\Q$-rational points $Q_i \in E_i(\Q)$,  $i =1, \ldots, s$, 
%we give an upper bound on the frequency of $s$-tuples 
%\[
%(n_1P_1+Q_1,\ldots, n_sP_s+Q_s) \in  E_1(\Q) \times \ldots \times E_s(\Q)
%\]
%with $n_1, \ldots, n_s$ in an arbitrary interval of length $N$, whose denominators
% or $x$-coordinates are multiplicatively dependent.
\end{abstract} 

\subjclass[2020]{11B39, 11G05, 11G50 (secondary)}
\keywords{Multiplicative dependence, elliptic curves, elliptic divisibility sequence}
\maketitle
\tableofcontents

\section{Introduction}
\subsection{Set-up}
For an  elliptic curve $E$ given by a  short Weierstrass equation 
\begin{equation}\label{eq:Weier}
y^2   = x^3 + a x + b
\end{equation}
with integral coefficients  
 $a$ and $b$, we denote by $E(\Q)$ the group of $\Q$-rational  points of $E$, and $O$ denotes the point at infinity, see~\cite{Silv-Book} for background.

We can write any point $P \in E(\Q)$, in the lowest form 
\[
P =\left(\frac{a_P}{d_P^2},\frac{b_P}{d_P^3}\right),
\]
where $d_P \in\mathbb{N}$, $a_P,b_P\in\Z$, and $\gcd(a_Pb_P , d_P)=1$.

As usual, we say that the nonzero complex numbers $\gamma_1,\ldots,\gamma_s$ are {\it multiplicatively dependent} (\md\/), if there exist integers $k_1,\ldots,k_s$, not all zero, such that
\[
\gamma_1^{k_1}\ldots \gamma_s^{k_s}=1.
\]
We also say that $\gamma_1,\ldots,\gamma_s$ are \md\ {\it of maximal rank} if no sub-tuple of $(\gamma_1,\ldots,\gamma_s)$ is \md.

Now assume we are given  $s$, not necessary distinct, elliptic curves  $E_1, \ldots, E_s $  over $\Q$ of positive rank. 
Given $s$-tuples 
\[
\Psf=(P_1, \ldots, P_s) \mand \Qsf=(Q_1, \ldots, Q_s)  
\]
of non-torsion points $P_i\in  E_i(\Q)$  and arbitrary 
points $Q_i \in E_i(\Q)$, $i =1, \ldots, s$, 
 we are interested in estimating the following quantities
\begin{align*}
D_{\Psf,\Qsf}(M,N) =\sharp\, \{ (n_1, \ldots, n_s)&\in   (M,M+N]^s:\\
& d_{n_1P_1 + Q_1}, \ldots, d_{n_sP_s+Q_s}\ \text{are \md} \}, 
\end{align*} 
and 
\begin{align*}
X_{\Psf,\Qsf}(M,N) =\sharp\, \{ (n_1&, \ldots, n_s)\in   (M,M+N]^s:\\
&  x(n_1P_1 + Q_1), \ldots, x(n_sP_s+Q_s)\  \text{are \md} \}.
\end{align*}

To estimate  $D_{\Psf,\Qsf}(M,N)$ and $X_{\Psf,\Qsf}(M,N)$, it is enough to estimate
\begin{align*}
D^*_{\Psf,\Qsf}(M,N) =\sharp\, \{ (n_1&, \ldots, n_s)\in   (M,M+N]^s:\\
&  d_{n_1P_1 + Q_1}, \ldots, d_{n_sP_s+Q_s}\ \text{are \md\ of maximal rank} \},
\end{align*} 
and 
\begin{align*}
X^*_{\Psf,\Qsf}(M,N) =\sharp\, \{ (n_1&, \ldots, n_s)\in   (M,M+N]^s:\\
&  x(n_1P_1 + Q_1), \ldots, x(n_sP_s+Q_s)\\
& \qquad \qquad  \text{are \md\ of maximal rank} \}.
\end{align*}

In particular, we can assume that $k_1\ldots k_s \ne 0$, and also that the integers $n_i$ are pairwise distinct. We can then estimate $D_{\Psf,\Qsf}(M,N)$ via the inequality
\begin{equation}
\label{eq:D D*}
D_{\Psf,\Qsf}(M,N)\le  \sum_{j=1}^s \binom{s}{j} D^*_{\Psf,\Qsf}(M,N)  N^{s-j} , 
\end{equation}
and similarly for $X_{\Psf,\Qsf}(M,N)$.

We remark that when $E_1=\ldots = E_s$ and  $Q_i$, $i =1, \ldots, s$, 
 are all torsion points (including the points at infinity $Q_i = O_i \in E_i$)  then there are very strong 
versions of the Zsigmondy theorem on primitive prime divisors, that is, prime divisors that do not divide any previous term of the sequence, see, for example,~\cite{Silv88,Ver20,Ver21,Ver23}. 
In this case it is reasonably straightforward to analyse the behaviour of $D^*_{\Psf,\Qsf,s}(M,N)$.
Hence here we  concentrate on the general case.

Furthermore, we denote by $\h(P)$ the {\it canonical height\/} of a point $P$ on an elliptic curve $E$,  
see~\cite[Section~VIII.9]{Silv-Book}. If $E$ is of rank $r$ then by~\cite[Theorem 4.5]{Kowalski}, see also~\cite[Exercise~9.8.e]{Silv-Book},     for $H \ge 1$ we have 
\begin{equation}
\label{eq:Count height}
H^{r/2} \ll  \sharp\, \{P \in E(\Q):~\h(P) \le H\} \ll H^{r/2}, 
\end{equation} 
(which  follows from~\eqref{eq:h and ||} as well).

Now, assume we are given  $s$, not necessary distinct, elliptic curves  $E_1, \ldots, E_s $  over $\Q$
and define the following quantities
\begin{align*}
\whD^*(H) =\sharp\, \{ (P_1, \ldots, P_s):~P_i& \in E_{i}(\Q),\ \h(P_i) \le H, \ i =1, \ldots, s,\ \text{and}\\
&  d_{P_1},\ldots, d_{P_s}~\text{are \md\ of maximal rank}\},
\end{align*}
and
\begin{align*}
\whX^*(H) =\sharp\, \{ (P_1, \ldots, P_s):~P_i &\in E_{i}(\Q),\ \h(P_i) \le H, \ i =1, \ldots, s,\ \text{and}\\
  x(P_1) &,\ldots, x(P_s)~\text{are \md\ of maximal rank}\}.
\end{align*}

\section{Main results}

Since there are only finitely many integral points in $E(\Q)$, see~\cite[Chapter~IX, Corollary 3.2.2]{Silv-Book}, 
it is enough to estimate $D_{\Psf,\Qsf}^{*}(M,N)$  and $X_{\Psf,\Qsf}^{*}(M,N)$ for $s\geq 2$. 

We recall the following convention:  the notations  $U \ll V$ and $U = O(V)$, are equivalent
to $|U|  \le c V$ for some constant $c>0$,
which throughout the paper may depend on the points  $\Psf$ and $\Qsf$, and thus on the curves $E_1,\ldots,E_s$.
We now show that the multiplicative dependence of denominators and of $x$-coordinates of points 
\[
(n_1P_1+Q_1,\ldots, n_sP_s+Q_s) \in  E_1(\Q) \times \ldots \times E_s(\Q)
\]
as in the above  is quite rare.

\begin{thm}\label{thm:MultDep-EC-MN-Q}
    Let  $s\geq 2$ be a fixed integer. Then, uniformly over $M\geq 0$, we have 
\begin{equation}\label{eqn:D^*}
D^*_{\Psf,\Qsf}(M,N) \ll  N^{6s/7} \mand X^*_{\Psf,\Qsf}(M,N) \ll  N^{6s/7}.
\end{equation}
%%and if for all curves $E_i$, $i =1, \ldots, s$,  the corresponding coefficient $b_i$ 
%%in~\eqref{eq:Weier} satisfies $b_i \ne 0$, then
%%\begin{equation}\label{eqn:X^*}
%%X^*_{\Psf,\Qsf}(M,N) \ll  N^{6s/7}.
%%\end{equation}
\end{thm}

Recall that by the Siegel theorem  there are only $O(1)$ values of $n$ with $ d_{nP + Q} = \pm 1$, corresponding to 
integer points on elliptic curves, see~\cite[Chapter~{IX}]{Silv-Book}.
We now see that the bottleneck in~\eqref{eq:D D*} comes from the case $s=2$. Hence, using 
Theorem~\ref{thm:MultDep-EC-MN-Q}, we have
the following bound on $D_{\Psf,\Qsf}(M,N)$ and $X_{\Psf,\Qsf}(M,N)$. 

\begin{cor}\label{cor:MultDep-EC-MN-Q}
    Let  $s\geq 2$ be a fixed integer. Then, uniformly over $M\geq 0$, we have 
\[
D_{\Psf,\Qsf}(M,N) \ll  N^{ s -2/7} \mand 
X_{\Psf,\Qsf}(M,N) \ll  N^{s -2/7}.
\]
\end{cor}

\begin{rem}\label{rem:s=2} In order to improve the bound of Corollary~\ref{cor:MultDep-EC-MN-Q}
one needs to get a better bound for the case $s=2$. Similarly to the argument in~\cite{BHOS}, 
this leads to a question of estimating the frequency of perfect powers in the sequences $d_{nP+Q}$.
If $Q = O$, then some finiteness results are provided by~\cite{HLS,NoSi}. However, as our 
Theorem~\ref{thm:MultDep-EC-MN-O} below shows, in this case we have a better bound anyway. 
\end{rem}

Next, in the special case when $Q_1, \ldots,  Q_s$ are all points at infinity we obtain a stronger 
bound when $s\le 6$. Namely,  let 
$\Psf=(P_1, \ldots, P_s)$ be an $s$-tuple of fixed non-torsion points with $P_i\in E_i(\Q)$, 
$i=1, \ldots, s$. We define
\begin{align*}
\Dsf_{\Psf}(M,N)=\sharp\, \{ (n_1,\ldots, n_s)\in &\left(M,M+N\right]^s :  \\ 
&  d_{n_1P_1},\ldots,d_{n_sP_s} \  \text{are \md} \}
\end{align*}
and prove the following result.  

\begin{thm}\label{thm:MultDep-EC-MN-O}
Let $s \geq 2$ be a fixed integer. Then, uniformly over $M \geq 0$, we have
\[
\Dsf_{\Psf}(M,N) \ll N^{s-1}.
\]
\end{thm}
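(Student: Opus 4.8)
The plan is to reduce, as in~\eqref{eq:D D*}, to tuples whose denominators are \md\ of \emph{maximal rank}, and then to exploit that for $Q=O$ the sequence $(d_{nP})_{n\ge1}$ is an elliptic divisibility sequence, so that the strong forms of Zsigmondy's theorem on primitive prime divisors (see~\cite{Silv88,Ver20,Ver21,Ver23}) are available. Concretely, for $I\subseteq\{1,\dots,s\}$ let $\Dsf^{*}_{I}(M,N)$ count the $(n_i)_{i\in I}\in(M,M+N]^{I}$ for which $(d_{n_iP_i})_{i\in I}$ is \md\ of maximal rank. A tuple counted by $\Dsf_{\Psf}(M,N)$ contains a minimal sub-tuple which is \md, and that sub-tuple is then \md\ of maximal rank while the remaining coordinates are unrestricted, so
\[
\Dsf_{\Psf}(M,N)\le \sum_{j=1}^{s}\binom{s}{j}\left(\max_{|I|=j}\Dsf^{*}_{I}(M,N)\right)N^{s-j}.
\]
For $j=1$ the inner count equals $\sharp\{n:d_{nP_i}=1\}=O(1)$, since $d_{nP_i}=1$ forces $nP_i$ to be an integral point and these are finite in number by Siegel's theorem~\cite[Chapter~IX]{Silv-Book}; this term contributes $O(N^{s-1})$. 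Hence it suffices to prove that for each fixed $s'\ge2$ one has $\Dsf^{*}_{\{1,\dots,s'\}}(M,N)\ll N^{s'-1}$ uniformly in $M\ge0$, as the remaining terms are then $\ll N^{j-1}N^{s-j}=N^{s-1}$.

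Next, fix $s:=s'\ge2$ and a tuple counted by $\Dsf^{*}_{\{1,\dots,s\}}(M,N)$; by maximality there is a relation $\prod_{i\le s}d_{n_iP_i}^{k_i}=1$ with all $k_i\ne0$. Then $d_{n_sP_s}^{|k_s|}\in\Gamma:=\langle d_{n_1P_1},\dots,d_{n_{s-1}P_{s-1}}\rangle\le\Q^{\times}$, so $d_{n_sP_s}$ lies in the division hull $\overline{\Gamma}=\{x\in\Q^{\times}:x^{\ell}\in\Gamma\ \text{for some}\ \ell\ge1\}$, which is finitely generated of rank at most $s-1$. Since there are at most $N^{s-1}$ choices of $(n_1,\dots,n_{s-1})$, the desired bound follows once we establish the uniform estimate: for every finitely generated $H\le\Q^{\times}$ of rank $r$,
\[
\sharp\{n\in(M,M+N]:d_{nP_s}\in H\}\ll r+1,
\]
with the implied constant depending only on $E_s$ and $P_s$.

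This estimate reduces to the claim that \emph{there is $m_0=m_0(E_s,P_s)$ such that $d_{m_1P_s},\dots,d_{m_tP_s}$ are multiplicatively independent whenever $m_1,\dots,m_t$ are distinct integers exceeding $m_0$}: granting it, if $r+1$ indices $n>m_0$ satisfied $d_{nP_s}\in H$ then the rank-$r$ group $H$ would contain $r+1$ multiplicatively independent elements, while only $O(1)$ values $n\le m_0$ occur. To prove the claim, suppose $\prod_i d_{m_iP_s}^{a_i}=1$ with not all $a_i$ zero; let $i^{*}$ be the index of the largest $m_i$ with $a_i\ne0$ and put $m^{*}=m_{i^{*}}$. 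If $m^{*}>m_0$, the cited Zsigmondy-type results give a prime $p$ dividing $d_{m^{*}P_s}$ but none of $d_{P_s},d_{2P_s},\dots,d_{(m^{*}-1)P_s}$; as the $m_i$ are distinct, $p\mid d_{m_iP_s}$ only when $m_i=m^{*}$, so the $p$-adic valuation of $\prod_i d_{m_iP_s}^{a_i}$ equals $a_{i^{*}}v_p(d_{m^{*}P_s})\ne0$, a contradiction. The only substantial step is this last claim, and it is the one place where $Q=O$ is essential: one needs a primitive prime divisor of $d_{nP_s}$ for \emph{all} $n$ past a curve-dependent threshold (taking care of the finitely many primes of bad reduction), which is exactly what~\cite{Silv88,Ver20,Ver21,Ver23} provide; for general $Q$ the sequence $(d_{nP+Q})_n$ is not a divisibility sequence, so this route is unavailable, which is why Theorem~\ref{thm:MultDep-EC-MN-Q} is proved by a different and weaker argument. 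The remaining ingredients — the reduction to maximal rank, Siegel finiteness of integral points, and the linear-algebra bound "a rank-$r$ group contains at most $r$ independent elements" — are routine.
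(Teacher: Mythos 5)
Your proposal is correct and follows essentially the same route as the paper: reduce to the maximal-rank count, fix $n_1,\dots,n_{s-1}$, observe that $d_{n_sP_s}$ lies in the division hull of the group (the paper uses the division semigroup) they generate, and bound the number of admissible $n_s$ by $O(r+1)$ uniformly in the generators via Silverman's primitive-divisor theorem combined with the linear-algebra fact that $r+1$ exponent vectors in rank $r$ are dependent. The paper packages this last step as an abstract lemma about eventually Zsigmondy sequences (Lemma~\ref{lem:semigroup}), but the content and the crucial uniformity in the generators are identical to your argument.
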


Finally, we also have the following estimate in the case when the points are counted by canonical 
height.

\begin{thm}\label{thm:mdoveralldenos} 
    Let  $s\geq 2$ be a fixed integer and let  $E_1, \ldots, E_s $ be defined over $\Q$ whose ranks over  $\Q$ are $r_1,\ldots, r_s$,  respectively .  Then,  we have 
\[
\whD^*(H), \ \whX^*(H) \ll H^{(r_1+\ldots+r_s)/2-s/14}.
\]
\end{thm}

We note that most of our results can also be extended to counting 
\md\ numerators, we develop the necessary tools in Appendix~\ref{app:numer}.

\section{Preliminaries}
\label{sec:prelim}

\subsection{Size of denominators and numerators}
Now, we need  standard information about  the size of $a_{nP + Q}$ and $d_{nP+Q}$, regardless of whether $Q$ is a torsion point or not.    As before, let $\h$ be the canonical height function, see~\cite{Silv-Book}. 

By~\cite[Lemma~2.1]{EveShp}, we have the following asymptotic formula:

\begin{lem}\label{lem:height} 
For any    fixed points $P,Q \in E(\Q)$, we have 
\[
  \log (d_{nP + Q})=\( 0.5 \h(P)+ o(1)\) n^2, 
\]
as $n \to \infty$. 
\end{lem} 
 
Next, let  $P_{1}, \dots, P_{r} \in E(\mathbb{Q})$ be a fixed basis for the free part of $E(\Q)$.
For 
\[
P=\sum_{i=1}^{r} n_i P_i+T,
\]
where $T\in E(\Q)_{\mathrm{tor}} $ is a torsion point, we define
\[
\|P\|_{\infty}=\max\{|n_i|:~i =1, \ldots, s\}.
\]
Then, for any $P\in E(\Q)$, we have
\begin{equation}
\label{eq:h and ||} 
\|P\|^2_{\infty}\ll \widehat{h}(P)\ll \|P\|^2_{\infty},
\end{equation} 
see the proof of~\cite[Theorem~4.5]{Kowalski}, which in particular 
implies~\eqref{eq:Count height}.

\subsection{Congruences with denominators and numerators}
\label{sec:cong denom}

We first recall the following bound given by~\cite[Lemma~2.2]{EveShp}. 

\begin{lem}\label{lem:divbym-den} 
For an integer $m\ge 2$,  uniformly over $M\geq 0$, we have
\[
\sharp\,\{M<n\leq M+N :~m\mid d_{nP+Q}\}\ll \frac{N}{\sqrt{\log m}}+ 1.
\]
\end{lem}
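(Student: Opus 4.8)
The plan is to show that, for the fixed points $P$ and $Q$, the set
\[
\cS_m=\{n\in\Z:\ m\mid d_{nP+Q}\}
\]
is either empty, a single integer, or contained in one residue class modulo an integer $r_m$ which grows at least like $\sqrt{\log m}$; the asserted bound is then immediate from counting the integers of an arithmetic progression of common difference $r_m$ lying in an interval of length $N$.

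First I would record the relevant group structure. Since $a,b\in\Z$, the Weierstrass equation~\eqref{eq:Weier} has coefficients in $\Z_p$ for every prime $p$, so for each $j\ge1$ the set $E_j(\Q_p)$ of points $R$ with $p^j\mid d_R$ (together with $O$) is the set of points of the formal group of $E$ over $\Z_p$ whose parameter lies in $p^j\Z_p$, and is in particular a subgroup of $E(\Q_p)$. Writing $m=\prod_p p^{j_p}$, it follows that
\[
E_m:=\{R\in E(\Q):\ m\mid d_R\}=\bigcap_p\(E_{j_p}(\Q_p)\cap E(\Q)\)
\]
is a subgroup of $E(\Q)$, and hence $T_m:=\{k\in\Z:\ m\mid d_{kP}\}=\{k\in\Z:\ kP\in E_m\}$ is a subgroup of $\Z$, say $T_m=r_m\Z$ with $r_m\ge0$.

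Next I note that if $n_1,n_2\in\cS_m$, then $n_1P+Q$ and $n_2P+Q$ both lie in $E_m$, so their difference $(n_1-n_2)P$ lies in $E_m$, that is, $n_1-n_2\in T_m$. Hence, when $r_m\ge1$, the set $\cS_m$ is contained in one residue class modulo $r_m$, which gives
\[
\sharp\{M<n\le M+N:\ n\in\cS_m\}\le \frac{N}{r_m}+1
\]
uniformly over $M\ge0$; and when $r_m=0$ the same bound holds trivially, since then $\cS_m$ has at most one element. To bound $r_m$ from below when $r_m\ge1$, observe that $r_mP\in E_m$, so $m\mid d_{r_mP}$, and since $P$ is non-torsion we have $r_mP\ne O$, whence $d_{r_mP}\ge1$ and therefore $d_{r_mP}\ge m$. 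By Lemma~\ref{lem:height} (applied with $Q$ replaced by $O$) there is a constant $C$, depending only on $P$, with $\log d_{r_mP}\le Cr_m^2$ for all $r_m\ge1$; together with $\log m\le\log d_{r_mP}$ this yields $r_m\gg\sqrt{\log m}$, and substituting into the last display completes the argument.

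The only delicate point, and the one I expect to be the main obstacle, is the structural claim that divisibility of the denominator by $m$ cuts out a subgroup of $E(\Q)$: arguing through the formal group of $E$ over $\Z_p$ (rather than through naive reduction modulo $p$) is what makes this robust at primes of bad reduction and for possibly non-minimal models. Everything else is routine; in particular only the upper bound $\log d_{nP}\ll n^2$, valid for every $n\ge1$, is needed, so the $O(n)$ error term implicit in Lemma~\ref{lem:height} causes no difficulty even when $r_m$ is small.
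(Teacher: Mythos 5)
Your argument is correct, and it is essentially the standard one (the paper itself only cites~\cite[Lemma~2.2]{EveShp} for this statement, and the same mechanism reappears in the paper's proofs of Lemmas~\ref{lem:divbym-num} and~\ref{lem:divbyp}): the key point in both is that $m\mid d_R$ cuts out a subgroup of $E(\Q)$ via the formal group filtration, so that two solutions $n_1,n_2$ force $m\mid d_{(n_1-n_2)P}$, and then the quadratic growth of $\log d_{kP}$ bounds the gap from below by $\gg\sqrt{\log m}$. Your packaging via a single residue class modulo $r_m$ rather than a pigeonhole on consecutive solutions is a cosmetic difference, and your handling of the degenerate cases ($r_m=0$, $P$ non-torsion) is sound.
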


Next, for a prime $p$, we denote by $\rho_p$ the index of appearance of $p$ as a divisor in the sequence $d_{nP}$, $n=1, 2, \ldots$, 
that is, the smallest $r$ such that $d_{rP} \equiv 0 \pmod p$; we set $\rho_p = \infty$ if no such $r$ exists, with the natural rules of operating 
with this quantity (like $\infty^{-1} = 0$). 

The following  result, only with the condition $p\mid d_{nP+Q}$,   has been  established in the proof of~\cite[Lemma~2.2]{EveShp}, and  if $Q = O$ is the point at infinity on $E$ it is also given as~\cite[Lemma~2.2]{Gott}.

Let $\nu_p$ denote  the $p$-adic valuation 
of rational numbers. 

\begin{lem}\label{lem:divbyp} 
    Let $p$ be any prime, then uniformly over points $Q \in E(\Q)$ we have
\[\sharp\,\{M<n\leq M+N :~\nu_p\(x(nP+Q)\) \ne 0 \}\ll \frac{N}{\rho_p}+ 1.
\]
\end{lem}  

\begin{proof} We count the number of $M<n\leq M+N$  such that $p \mid a_{nP+Q}$ and  $p \mid d_{nP+Q}$ separately, 
starting with $d_{nP+Q}$. 

Let $M +1 \le n_1 < \ldots < n_t \le M+N$ be all solutions to 
$ d_{nP+Q} \equiv 0 \pmod p$, $M<n\leq M+N$. If $t =1$ then there is nothing to prove. 

Otherwise there is $i =1, \ldots, t-1$ 
such that $n_{i+1} - n_i \le N/(t-1)$. 
We can also assume that $p$ is large enough so that the reduction of $E$ modulo $p$ 
is an elliptic curve  over the finite field of $p$ elements. 

Since 
\[
d_{n_iP+Q} \equiv d_{n_{i+1}P+Q} \equiv 0 \pmod p
\]
we see that $n_iP+Q$ and  $n_{i+1}P+Q$  are points at infinity in the reduction of $E$ modulo $p$, 
thus so is 
\[
\(n_{i+1}P+Q\) - \(n_iP+Q\) = \(n_{i+1}-n_i\)P.
\]
Thus $d_ {\(n_{i+1}-n_i\)P} \equiv 0 \pmod p$, which implies that $N/(t-1) \ge \rho_p$.  

Next let  $M +1 \le n_1 < \ldots < n_t \le M+N$ be all solutions to 
$ a_{n_iP+Q} \equiv 0 \pmod p$, $M<n\leq M+N$. Since for each $i=1,\ldots,t$, 
one has $y(n_iP+Q)^2 \equiv b \pmod p$, there are at most two values of 
$y(n_iP+Q) \bmod p$.  Therefore, there is a subsequence $m_1< \ldots < m_u$ 
 of the sequence $n_1 < \ldots < n_t$ of  length   $u  \ge t/2$ such that all 
 $m_1 P + Q \equiv  \ldots  \equiv  m_uP+Q\pmod p$. This means that $d_{(m_i-m_1)P}  \equiv 0 \pmod p$
 and recalling the above bound, we  conclude the proof. 
\end{proof} 

We also need the following version of~\cite[Lemma~2.1]{Gott}.

\begin{lem}\label{lem: rp a.a. p} 
 For any $R \ge 2$
\[\sharp\,  \{p :~\rho_p\le R \}\ll \frac{R^3}{\log R} .
\]
\end{lem}

\begin{rem} In~\cite{Gott}, the inequality of  Lemma~\ref{lem: rp a.a. p} is given with $R^3$. 
This is because the proof appeals to the bound $\omega(s) \ll  \log s$   on the number of prime
divisors of an integer $s \ge 2$. However, the trivial inequality $\omega(s)! \le   s$ and the  Stirling formula
immediately imply  $\omega(s) \ll  \log s / \log \log s $, which gives the present 
form of  Lemma~\ref{lem: rp a.a. p}. 
\end{rem} 

Let $E(\mathbb{Q})$ be of rank $r$ and let $p$ be a prime. We fix a basis 
 $\vec{P}=(P_1,\ldots, P_r)$ for its free part and 
 define
\begin{equation}\label{eqn:rE}
\rho_ p(\vec{P})=\min \{\rho_{i,p}:~  i=1, \ldots, r\},
    \end{equation}
where $\rho_{i,p}$ is defined as in above with respect to the points $P_i$, 
$i=1, \ldots, r$.

Denote
\[
\cE_{p}(H)=\{Q\in E(\Q) :~  \h(Q)\le H, \ \nu_p\(x(Q)\) \ne 0 \}.
\]

Then, we have the following version of Lemma~\ref{lem:divbyp}.

\begin{lem}\label{lem:divbypoverallpoints}
Let $p$ be any prime, then, in the above notation,  we have
\[
\sharp\, \cE_{p}(H) \ll H^{(r-1)/2}\(\frac{H^{1/2}}{\rho_ p(\vec{P})}+ 1\).
\]
\end{lem}

\begin{proof}
For each $ i=1, \ldots, r$, let us denote 
\begin{align*}
&\widetilde \cE_{i,p}(H)= \bigcup_{\substack{P\in E(\Q)/\langle P_{i} \rangle\\ \| P\|_{\infty}^2\le  H}} \left\{Q=nP_{i}+P:~|n| \le H^{1/2},~\nu_p(x(Q))\ne 0\right\},
\end{align*}
where  
\begin{equation}
\begin{split}
\label{eq:E/Pi}
& E(\Q)/\langle P_{i} \rangle\\
& \quad \,=
\left\{\sum_{\substack{  j=1\\j\ne i}}^r n_j P_{j}+T:~n_j \in \Z, \  1\le j \le r, \ j \ne i, \T\in E(\Q)_{\mathrm{tor}}\right\}.
\end{split}
\end{equation}

Now, we see from~\eqref{eq:h and ||}  that there is a constant $c>0$, depending only on the 
choice of $\vec{P}$, such that 
\[
\sharp\,\cE_{p}(H) \le \sum_{i=1}^{r} \sharp\,\widetilde \cE_{i,p}(cH).
\]
Applying  Lemma~\ref{lem:divbyp}  to each of the quantities $\sharp\,\widetilde \cE_{i,p}(cH)$, we conclude 
the proof. 
\end{proof}

%\subsection{Primitive divisors and 
\subsection{Counting $\cS$-units amongst the denominators}
Given a set $\cS$ of primes, we define by $\langle \cS\rangle \subseteq \Z$ the multiplicative semigroup 
generated by primes from  $\cS$. Consider the set
\[
\cU_{P,Q}(M, N;\cS)=\{M<n \leq M+N:~  d_{nP+Q} \in \langle \cS\rangle \}. 
\]

We argue as in the proof of~\cite[Theorem~1]{Shp}, and prove the following estimate.

\begin{lem}\label{lem:sqx}
For any finite  set $\cS$ of primes of cardinality $S = \sharp\,  \cS$, we have 
\[
\sharp\,  \cU_{P,Q}(M,N,\cS)\ll  \left(1+\frac{N}{M}\right)^2 S.
\]
 \end{lem} 

\begin{proof} 
Let us consider the product
\[
 W_d=\prod_{n\in  \cU_{P,Q}(M, N;\cS)} d_{nP+Q}.
\] 
%We prove the bound for $\cV_{P,Q}(M,N,\cS)$, same argument and computation applies to $\cU_{P,Q}(M,N,\cS)$, 

Lemma~\ref{lem:height} shows that
\begin{equation}
\label{eq:W-LowBpund}
M^2\sharp\, \cU_{P,Q}(M, N;\cS)\ll \  \log W_d. 
\end{equation}  

For each prime $p$, denoting by $\alpha_p= \nu_p\(W_d\)$,
we have 
\begin{equation}
\label{eq:W-UpBpund}
\log W_d\leq \sum_{\substack{p\in \cS}} \alpha_p \log p. 
\end{equation} 

Note that by Lemma~\ref{lem:height},  every prime $p$ divides a term $d_{nP+Q}$ for $M<n\leq M+N$, with a power at most $\beta_p\ll (M+N)^2/\log p$. 
By Lemma~\ref{lem:divbym-den}, we then have
\begin{align*}
\alpha_p&\leq \sum_{k=1}^{\beta_p}\sharp\, \{M<n\leq M+N: p^k \mid  d_{nP+Q}\}\\
&\ll \sum_{k=1}^{\beta_p}\left(\frac{N}{\sqrt{k \log p}}+1\right)\ll \frac{N\sqrt{\beta_p}}{\sqrt{\log p}}+\beta_p\\
&\ll \frac{(M+N)^2}{\log p}. 
\end{align*}
Substituting this bound in~\eqref{eq:W-UpBpund} we obtain $\log W_d \ll (M+N)^2 S$. 
Now, recalling~\eqref{eq:W-LowBpund},  we complete the proof. 
\end{proof}

We have the straightforward consequence of Lemma~\ref{lem:sqx}, using dyadic partition as in~\cite[Corollary]{Shp}.

\begin{cor}
\label{cor:sqx}
For any finite set $\cS$ of primes of cardinality $S = \sharp\,  \cS$, we have 
\[
\sharp\,  \cU_{P,Q}(0, N;\cS) \ll  S \log  N.
\]
\end{cor}

We also  need the following version of a result of Silverman~\cite[Proposition~10]{Silv88}.

\begin{lem}\label{lem:PrimDiv}
Let $E$ be an elliptic curve given by a Weierstrass equation~\eqref{eq:Weier}  and $P\in E(\Q)$ a point which is not a torsion point. Then there exists a constant $c(P)$ depending only on  $P$ such that $d_{nP}$ has a primitive prime divisor for every $n>c(P)$.
\end{lem}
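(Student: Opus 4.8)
The plan is to reproduce the classical argument for primitive divisors in elliptic divisibility sequences (see~\cite{Silv88}), the only genuinely arithmetic ingredient being the structure of the $p$-adic filtration on $E$; everything else is a size comparison via Lemma~\ref{lem:height}.

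Recall that a prime $p$ is a \emph{primitive} prime divisor of $d_{nP}$ exactly when $r_p=n$, and that for all but finitely many primes $p$ the set $\{n\ge 1:~p\mid d_{nP}\}$ is precisely $r_p\Z$ (it is the set of $n$ with $nP$ lying in the kernel of reduction, which is a subgroup of $E(\Q_p)$); in particular $p\mid d_{nP}$ forces $r_p\mid n$. Hence, if $d_{nP}$ has \emph{no} primitive prime divisor, then every prime $p\mid d_{nP}$ has $r_p$ equal to a proper divisor of $n$, so $r_p\le n/2$. Writing $d_{nP}=A_nB_n$ with $A_n=\prod_{r_p=n}p^{\nu_p(d_{nP})}$ the primitive part and $B_n$ the imprimitive part, the statement is equivalent to $A_n>1$ for all large $n$, which I would prove by bounding $\log B_n$ from above and $\log d_{nP}$ from below and comparing.

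For the upper bound on $\log B_n$ I would invoke the valuation formula from the theory of formal groups: if $p$ is a prime of good reduction with $m=r_p\mid n$, then $\nu_p(d_{nP})=\nu_p(d_{mP})+\nu_p(n/m)$, and the same identity holds up to a bounded additive error for the finitely many primes of bad reduction and for $p=2,3$. Since any prime $q$ with $r_q=m$ divides $d_{mP}$, one has $\prod_{r_q=m}q^{\nu_q(d_{mP})}\mid d_{mP}$, and grouping the first terms by the value of $r_p$ while bounding the second ones by $\sum_p\nu_p(n)\log p=\log n$ gives
\[
\log B_n\ \le\ \sum_{\substack{m\mid n\\ m<n}}\log d_{mP}\ +\ \log n\ +\ O(1).
\]
Now Lemma~\ref{lem:height}, in the sharper form $\log d_{mP}=0.5\,\h(P)\,m^2+O(m)$ from~\cite{EveShp}, together with the facts that every proper divisor of $n$ is $n/d$ for some $d\ge 2$, so $\sum_{m\mid n,\,m<n}m^2\le n^2\sum_{d\ge 2}d^{-2}=(\pi^2/6-1)n^2$, and $\sum_{m\mid n}m=O(n\log\log n)$, yields $\log B_n\le\bigl(\tfrac{\pi^2}{12}-\tfrac12\bigr)\h(P)\,n^2+o(n^2)$. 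Since also $\log d_{nP}=\tfrac12\h(P)\,n^2+o(n^2)$ and $\tfrac12-\bigl(\tfrac{\pi^2}{12}-\tfrac12\bigr)=1-\tfrac{\pi^2}{12}>0$, we get $\log A_n=\log d_{nP}-\log B_n\ge\bigl(1-\tfrac{\pi^2}{12}+o(1)\bigr)\h(P)\,n^2$, which is positive once $n$ exceeds some constant $c(P)$; hence $A_n>1$, i.e.\ $d_{nP}$ has a primitive prime divisor.

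The main obstacle is the valuation identity $\nu_p(d_{nP})=\nu_p(d_{r_pP})+\nu_p(n/r_p)$ together with the control of the exceptional primes $p=2,3$ and the primes of bad reduction; this is the non-elementary input, but it is standard in the theory of elliptic divisibility sequences and is exactly what Silverman's Proposition~10 supplies. Once it is in hand, the rest is the elementary divisor-sum estimate above, and the numerical slack (the constant $1-\pi^2/12>0$ coming from $\tfrac12$ versus $\pi^2/12$) is what makes the comparison close.
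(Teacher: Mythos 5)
Your argument is correct, but note that the paper does not prove this lemma at all: it is stated verbatim as a version of Silverman's Proposition~10 and the citation to \cite{Silv88} is the entire ``proof''. What you have written is essentially a self-contained reconstruction of Silverman's argument: the key inputs are exactly the ones you flag, namely that $\{n:~p\mid d_{nP}\}=r_p\Z$ (the kernel of reduction is a subgroup, so $p\mid d_{nP}$ forces $r_p\mid n$), the formal-group valuation identity $\nu_p(d_{nP})=\nu_p(d_{r_pP})+\nu_p(n/r_p)$ up to bounded corrections at the finitely many exceptional primes, the divisibility $\prod_{r_p=m}p^{\nu_p(d_{mP})}\mid d_{mP}$, and the growth $\log d_{nP}=\tfrac12\h(P)n^2+O(n)$ together with $\h(P)>0$ for non-torsion $P$. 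The only (cosmetic) difference from the usual write-up is your bookkeeping of the imprimitive part: summing over all proper divisors of $n$ gives the constant $\sum_{d\ge 2}d^{-2}=\pi^2/6-1\approx 0.64$, whereas grouping the imprimitive primes into the terms $d_{(n/\ell)P}$ for primes $\ell\mid n$ gives the sharper $\sum_{\ell\ \mathrm{prime}}\ell^{-2}\approx 0.45$; both are $<1$, so your comparison closes and yields the required constant $c(P)$.
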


We note that since $P\in E(\Q)$ this also means that the constant $c(P)$ depends, 
implicitly, on the curve $E$. 

%\hrule 
%
%We also  consider the set of all points on $E(\Q)$ with $d_P \in \langle \cS\rangle$, that is, 
%\[
%\cU(\cS)=\{P \in E(\Q):~  d_{P} \in \langle \cS\rangle \}.
%\]
%
%Then, we have the following rather crude version of Corollary~\ref{cor:sqx}. 
%
%\begin{lem}\label{lem:siegel}
%    For any finite set $\cS$ of primes of cardinality $S = \sharp\,  \cS$, we have 
%\[
%\sharp\,  \cU(\cS) \le \exp\(O(S)\).
%\] 
%\end{lem}
%
%\begin{proof}  The proof  follows from the second part of~\cite[Theorem~A]{S87}, as clearly the quantity $\delta$ appearing there is $O(1)$.
%\end{proof}
%
%\hrule 

We also  consider the set of all points on $E(\Q)$ with $d_P \in \langle \cS\rangle$, that is, 
\[
\cU(\cS)=\{P \in E(\Q):~  d_{P} \in \langle \cS\rangle \}.
\]
By~\cite[Theorem~A]{S87} we know that $ \sharp\,\cU(\cS) \le \exp\(O(S)\)$. To get a linear (instead of exponential) dependence on $S$, we consider the set of points
\[
\cU(H;\cS)=\{P \in E(\Q):~\h(P)\le  H, d_{P} \in \langle \cS\rangle \}.
\]

Then, we have the following  version of Corollary~\ref{cor:sqx}. 

\begin{lem}\label{lem:siegel}
    For any finite set $\cS$ of primes of cardinality $S = \sharp\,  \cS$, we have 
\[
\sharp\,  \cU(H;\cS) \ll H^{(r-1)/2}S\log H.
\]
\end{lem}

\begin{proof}  The proof  follows same ideas as in the proof of Lemma~\ref{lem:divbypoverallpoints}, coupled with Corollary~\ref{cor:sqx}. Indeed, assume $E(\mathbb{Q})$ is of rank $r$ with a fixed basis 
 $\vec{P}=(P_1,\ldots, P_r)$ for its free part. As in the proof   of Lemma~\ref{lem:divbypoverallpoints}, for each $ i=1, \ldots, r$, let us denote 
\[
\widetilde \cU_{i}(H;\cS)=\bigcup_{|n_i| \le H^{1/2}} \bigcup_{\substack{P\in E(\Q)/\langle P_{i} \rangle\\ \| P\|_{\infty}\le  n_i}} \left\{Q=n_iP_{i}+P:~d_Q\in \langle \cS\rangle\right\},
\]
% \hrule
%\begin{align*}
%&\widetilde \cU_{i}(H;\cS)= \bigcup_{\substack{P\in E(\Q)/\langle P_{i} \rangle\\ \| P\|_{\infty}^2\le  H}} \left\{Q=nP_{i}+P:~|n| \le H^{1/2},~d_Q\in \langle \cS\rangle\right\},
%\end{align*}
where  $E(\Q)/\langle P_{i} \rangle$ is defined by~\eqref{eq:E/Pi}.

Now, we see from~\eqref{eq:h and ||}  that there is a constant $c>0$, depending only on the 
choice of $\vec{P}$, such that 
\[
\sharp\,\cU(H;\cS) \le \sum_{i=1}^{r} \sharp\,\widetilde \cU_{i}(cH;\cS).
\]
Applying now Corollary~\ref{cor:sqx} to bound $\sharp\,\widetilde \cU_{i}(H;\cS)$ (we note that, by the definition of the set $\widetilde \cU_{i}(H;\cS)$ and the linear independence of the points $P_1,.\ldots,P_r$, for each $Q\in \cU_{i}(H;\cS)$, we have $\h(Q)\gg n_i^2$, and thus the bound~\eqref{eq:W-LowBpund} holds uniformly in $P$ and the implied constant in Corollary~\ref{cor:sqx} depends only on $\vec{P}$), we conclude the proof. 
\end{proof}

\subsection{Vertex covers}
We need the following graph-theoretic result, see~\cite[Lemma~2.7]{BHOS}.

\begin{lem}
\label{lem:GraphCover}
Assume we are given a  graph with the vertex set $\cV$ of cardinality  $\ell=\sharp\,  \cV$ and having no isolated vertex. Then there exists $ \cV_1\subseteq  \cV$ with $\sharp\,  \cV_1\leq \ell/2$ such that for any $v_2\in  \cV_2= \cV\setminus  \cV_1$ there exists a vertex $v_1\in  \cV_1$ which is a neighbour  of $v_2$.
\end{lem}

\section{Proof of Theorem~\ref{thm:MultDep-EC-MN-Q}}

\subsection{Classification of \md\ $s$-tuples}
\label{sec:Proof MNQ-1}
We only consider the case of $D^*_{\Psf,\Qsf}(M,N)$ as we have full analogues 
of all necessary ingredients to estimate $X^*_{\Psf,\Qsf}(M,N)$ in the identical way. 

Suppose that for some integers $n_1,\ldots,n_s \in [M+1,M+N]$  the terms $d_{n_1P_1 + Q_1}, \ldots, d_{n_sP_s+Q_s}$ are \md\ of maximal rank, that is, we have
\[
d_{n_1P_{1} + Q_1}^{k_1} \cdots d_{n_sP_{s}+Q_s}^{k_s}=1
\]
with some nonzero  integers $k_1,\ldots,k_s$.

Let $\rho_{i,p}$ be defined as in Section~\ref{sec:cong denom} and associated with $P_i$. 

Choose a positive real number $R \le N$ to be specified later, and let  $ \cW(R)$  be
the set of primes $p$ with $\rho_{i,p} \le R$ for at least one $i =1, \ldots, s$. 
By   Lemma~\ref{lem: rp a.a. p}  we have 
$\sharp\, \cW(R) \ll R^3/\log R$.

Write $t$ for the number of indices $i =1, \ldots, s$   for which $d_{n_iP_i + Q_i}$ 
has a prime divisor $p_i\notin \cW(R)$,
and let $r = s-t$ for the number of indices $i$ with $d_{n_iP_i + Q_i}$ having all prime divisors in $\cW(R)$.
Without loss of generality, we may assume that the corresponding integers are  $n_1,\ldots,n_t$, and $n_{t+1},\ldots,n_s$, respectively.

Applying Lemma~\ref{lem:sqx}, for $M\ge 1$, we obtain that the number $K_1$ of such $r$-tuples $\(n_{t+1},\ldots,n_s\) \in [M+1,M+N]^r$ satisfies
\begin{equation}
\label{eq:M1}
K_1\ll \(1+\frac{N}{M}\)^{2r}\(\frac{R^3}{\log R}\)^r.
\end{equation}
If $M=0$, by Corollary~\ref{cor:sqx}\ we have the bound
\begin{equation}
\label{eq:M1 M=0}
K_1\ll (\log N)^{r}\(\frac{R^3}{\log R}\)^r.
\end{equation}

We assume that such an $r$-tuple  $\(n_{t+1},\ldots,n_s\)$ is fixed.

Consider the $t$-tuples $\(n_1,\ldots,n_t\)\in [M+1,M+N]^t$. Recall that for any $1\leq i\leq t$, there is a prime $p_i\notin \cW(R)$ such that $p_i\mid d_{n_iP_i + Q_i}$.

Define the graph $\cG$ on $t$ vertices  $1, \ldots, t$ and connect the vertices $i$ and $j$ if and only if  $\gcd(d_{n_iP_i + Q_i},d_{n_jP_j + Q_j})$  has a prime divisor outside $\cW(R)$. 
%%(in the case of $X^*_{\Psf,\Qsf}(M,N)$ this condition is replaced by 
%%$\nu_p\(x\(n_iP_i + Q_i\)\),\nu_p\(x\(n_jP_j + Q_j\)\) \ne 0$ for some $p \notin \cW(R)$). 
 Observe that  as $d_{n_1P_1 + Q_1},\ldots,d_{n_sP_s + Q_s}$ are \md\ of
maximal rank, $\cG$ has no isolated vertex. Thus, by Lemma~\ref{lem:GraphCover}, there exists a
subset $\cI$ of $\{1,\ldots, t\}$ with
\[
m=\sharp\,  \cI\leq \fl{t/2}
\]
such that for any $j$ with
\[
j\in \{n_1,\ldots,n_t\}\setminus \cI
\]
the vertex $d_{n_jP_j + Q_j}$ is connected with some $d_{n_iP_i + Q_i}$ in $\cG$, for some $i\in \cI$.

Without loss of generality we may assume that $\cI=\{1,\ldots, m\}$. Trivially, the number $K_2$ of such $m$-tuples $(n_1,\ldots,n_m)\in[M+1,M+N]^m$ satisfies
\begin{equation}
\label{eq:M2}
K_2\ll N^m.
\end{equation}

We now fix  such an  $m$-tuple. For $\ell = t-m$, we now count  the number $K_3$ of
the remaining $\ell $-tuples $(n_{m+1},\ldots,n_t) \in [M+1,M+N]^\ell$.
Since each $d_{n_jP_j + Q_j}$ with $m+1\leq j\leq t$ has a common prime factor $p\notin \cW(R)$ with $d_{n_iP_i + Q_i}$ for some $1\leq i\leq m$, by Lemma~\ref{lem:divbyp} we obtain that $n_j$ comes from a set $\cN_j$ of cardinality
\[
\sharp\,  \cN_j \ll  N/\rho_{j,p} +1   \ll N/R +1 \ll N/R
\]
since we have assumes that $R \le N$. 
Thus we obtain
\begin{equation}
\label{eq:M3}
K_3 \le \prod_{j=m+1}^t \sharp\,  \cN_j \ll (N/R)^{t-m}.
\end{equation}

\subsection{Optimisation and deriving the bound on $D^*_{\Psf,\Qsf}(M,N)$}
\label{sec:optim}
If $M\le N$, then 
\[
D^*_{\Psf,\Qsf}(M,N)\le D^*_{\Psf,\Qsf}(0,2N).
\]
Putting this together with~\eqref{eq:M1 M=0},~\eqref{eq:M2} and~\eqref{eq:M3}, we obtain
\begin{align*}
D^*_{\Psf,\Qsf}(M,N)&\le K_1K_2K_3\\
& \ll (\log N)^{r} \(\frac{R^3}{\log R}\)^rN^mN^{t-m}R^{-(t-m)}\\
&\ll N^{t}R^{3s-7t/2}  \(\frac{\log N}{\log R}\)^{r},
\end{align*}
where the last inequality follows from the fact that $m\le t/2$. 
Writing $R=N^{\eta}$, with $0\leq \eta\leq 1$ to be chosen, we need to minimize the exponent (excluding $o(1)$) above, over the range $1\leq t\leq s$. The exponent is equal to
\[t+\eta(3s-7t/2)=t(1-7\eta/2)+3\eta s,
\]
which with  $\eta=2/7$ becomes $6s/7$. 
Hence, we have
\[
D^*_{\Psf,\Qsf}(M,N)\ll N^{6s/7}.
\]

If $M> N$, then  the bound~\eqref{eq:M1} becomes
\[
K_1\ll (R^3/\log R)^r, 
\]
and as above we obtain again
\begin{align*}
D^*_{\Psf,\Qsf}(M,N)&\le K_1K_2K_3\\
& \ll \(\frac{R^3}{\log R}\)^rN^mN^{t-m}R^{-(t-m)}\\
&\le N^{t}R^{3s-7t/2} (\log R)^{-r} .   
\end{align*} 
By the same choice of $R$ as above, we get
\[
D^*_{\Psf,\Qsf}(M,N)\ll N^{6s/7}, 
\]
and conclude the proof.

\subsection{Deriving the bound on $X^*_{\Psf,\Qsf}(M,N)$}
The argument is essentially identical to that used in the estimate on $D^*_{\Psf,\Qsf}(M,N)$, 
so we only sketch it.

Write $r$ for the number of indices $i =1, \ldots, s$ for which 
\[
\nu_p(x(n_iP_i + Q_i))\ne 0 \implies p\in \cW(R),
\]
where $\cW(R)$ is defined in Section~\ref{sec:Proof MNQ-1}. Without loss of generality, we may assume that the corresponding integers are $n_{t+1},\ldots,n_s$, respectively.

 Applying Lemma~\ref{lem:sqx}, for $M\ge 1$, we obtain that the number $K_1$ of such $r$-tuples $\(n_{t+1},\ldots,n_s\) \in [M+1,M+N]^r$ satisfies
\[
K_1\ll \(1+\frac{N}{M}\)^{2r}\(\frac{R^3}{\log R}\)^r.
\]
If $M=0$, by  Corollary~\ref{cor:sqx}, we have the bound
\[
K_1\ll (\log N)^{r}\(\frac{R^3}{\log R}\)^r.
\]
We assume that such an $r$-tuple  $\(n_{t+1},\ldots,n_s\)$ is fixed. 

 Consider the $t$-tuples $\(n_1,\ldots,n_t\)\in [M+1,M+N]^t$.  Since $x(n_1P_1 + Q_1),\ldots,x(n_sP_s + Q_s)$ are \md\ of maximal rank, for each $i=1,\ldots,t$, there is a prime $p_i\notin \cW(R)$ such that 
\[
\nu_{p_i}(x(n_iP_i + Q_i))\ne 0.
\]
and furthermore, there exists $1\le j\le t$, $j\ne i$, with 
\[
\nu_{p_i}(x(n_jP_j + Q_j))\ne 0.
\]

 We now define a new graph $\cG$ on $t$ vertices  $1, \ldots, t$ and connect the vertices $i$ and $j$ if and only if $\nu_p\(x\(n_iP_i + Q_i\)\),\nu_p\(x\(n_jP_j + Q_j\)\) \ne 0$ for some $p \notin \cW(R)$. By the observation above, $\cG$ has no isolated vertex. Thus, by Lemma~\ref{lem:GraphCover}, there exists a
subset $\cI$ of $\{1,\ldots, t\}$ with
\[
m=\sharp\,  \cI\leq \fl{t/2}
\]
such that for any $j$ with
\[
j\in \{n_1,\ldots,n_t\}\setminus \cI
\]
the vertex $j$ is connected with some $i$ in $\cG$, for some $i\in \cI$.

 Clearly by Lemma~\ref{lem:divbyp}, we have the similar estimates for the same quantities $K_2$ and $K_3$ as in the proof of \eqref{eqn:D^*}, and the proof concludes by the same argument as in Section~\ref{sec:optim}.
 
\section{Proof of Theorem~\ref{thm:MultDep-EC-MN-O}}

\subsection{Terms of eventually Zsigmondy sequences in finitely generated semigroups}

We say that a sequence of integers $\cZ = (z_n)_{n=1}^\infty$ is {\it eventually Zsigmondy\/} if 
there is some $N_0\ge 1$ such all terms $z_n$ with $n  \ge  N_0$ have a primitive prime divisor. 

We say  a finitely generated semigroup  $\Gamma\subseteq \Z$ is 
of rank $r$ if $r$ is the smallest number of  generators $g_1, \ldots, g_r$ such that 
\[
\Gamma = \{g_1^{k_1} \ldots g_r^{k_r}:~ k_i \in \Z, \ i =1, \ldots, r\}.
\]
Furthermore, we denote to $\ovG$ its division semigroup, that is, 
\[
\ovG = \{z \in \Z:~ z^m \in \Gamma\ \text{for some}\ m  \in \N \}.
\] 

\begin{lem}\label{lem:semigroup}
Let $\cZ = (z_n)_{n=1}^\infty$ be an eventually Zsigmondy sequence of integers 
and let $\Gamma\subseteq \Z$ be a finitely generated semigroup of rank $r\ge1$.
There is a constant $C(\cZ, r)$, depending only on $\cZ$ and $r$, such that 
\[
 \sharp\,\{n \in \N:~z_n \in \ovG\} \le C(\cZ, r). 
\]
\end{lem} 
 
\begin{proof} 
We show that one can take $ C(\cZ, r) = N_0 + r$, where $N_0$ is as in the definition of  an eventually Zsigmondy sequence. In other words, we show that the index $n$ of  $z_n\in \ovG$ can be chosen in at most $ N_0 + r$ ways.

Assume that 
\[
\sharp\,\{n \in N:~z_n \in \ovG\} > N_0 + r.
\]
Then  we can choose  $n_{i}$ for $i=0,1,\ldots, r$, 
with 
\[
N_0 \le n_{0}<n_{1}<\ldots < n_{r}  
\]
such that $z_{n_i} \in \ovG$.

We observe that since  $z_{n_i}$ has a  primitive divisor, we automatically 
conclude that $z_{n_i} \ne \pm 1$. In fact, we do not need $z_{n_0}$ to have
 a  primitive divisor, we only need  $z_{n_0}\ne \pm 1$, which we ensure 
 by the condition $n_0 \ge N_0$.

Let $g_1, \ldots, g_r$ be the generators of $\Gamma$. Then we have the 
following $r+1$ multiplicative relations
\begin{equation}\label{eq: zi g1...gr}
z_{n_i}^{m_i} =g_1^{k_{1,i}} \ldots  g_{r}^{k_{r,i}} , \qquad i=0,\ldots,r, 
\end{equation}
with some nonzero vectors  $\uk_i=(k_{1,i},\ldots, k_{r,i})\in \Z^{r}$ 
and a positive integer $m_i$.

Clearly, we can find a non-zero integer vector $\ut=(t_0,\ldots,t_r)$ such that 
\begin{equation}\label{eq:znrelation}
z_{n_0}^{m_0t_0}\ldots z_{n_r}^{m_rt_r}=1.
\end{equation}
Indeed,  $\ut$ is any non-zero solution to  a system of $r$   linear homogeneous  equations with all integer coefficients (given by the exponents in~\eqref{eq: zi g1...gr}), and in $r+1$ variables.

However, a relation of the form \eqref{eq:znrelation} cannot hold. Indeed, if at least two coordinates of $\ut$ are non-zero, then the Zsigmondy property is clearly violated. On the other hand, if exactly one $t_i$ is non-zero, then \eqref{eq:znrelation} cannot hold, since each $z_{n_i}\ne \pm 1$.
\end{proof}

We emphasise that it is very important that the constant $C(\cZ, r)$ in 
Lemma~\ref{lem:semigroup} depends only on the rank of the semigroup $\Gamma$ 
rather than on its generators.

\subsection{Concluding the proof} Because of the inequality~\eqref{eq:D D*}, 
it is enough to estimate  
\begin{align*}
\Dsf^*_{\Psf}(M,N) = &\sharp\, \left\{ (n_1,\ldots, n_s)\in \left(M,M+N\right]^s\ : \right. \\ 
&\quad\quad\quad\left. d_{n_1P_1},\ldots,d_{n_sP_s} \  \text{are \md\ of maximal rank} \right\}. 
\end{align*}

Hence, we  estimate the number of $s$-tuples $(n_1,\ldots, n_s)$ in the box $\left(M,M+N\right]^s$ such that
\begin{equation}
\label{eq:mult rel}
d_{n_1P_1}^{k_1} \ldots d_{n_sP_s}^{k_s}=1
\end{equation}
for some $k_1, \ldots, k_s\in \Z \setminus \{ 0 \}$. Fix the first $s-1$ coefficients $n_1,\ldots ,n_{s-1}$, 
and rewrite~\eqref{eq:mult rel} as
\[
d_{n_sP_s}^{-k_s} = d_{n_1P_1}^{k_1} \ldots d_{n_{s-1}P_{s-1}}^{k_{s-1}}
\]
with $k_s \ne 0$. 

Hence,  we see that $d_{n_sP_s}$ belongs to the  division semigroup generated by $d_{n_1P_1},  \ldots, d_{n_{s-1}P_{s-1}}$. 

Since by Lemma~\ref{lem:PrimDiv},  the sequence $d_{n P_s}$ is eventually Zsigmondy, the  bound 
$\Dsf^*_{\Psf}(M,N) \ll N^{s-1}$ now follows 
from Lemma~\ref{lem:semigroup}, which concludes the proof.

\section{Proof of Theorem~\ref{thm:mdoveralldenos}} 
\subsection{Preliminary comments} 
We argue exactly as in the proof of Theorem~\ref{thm:MultDep-EC-MN-Q}. First, we fix a basis vector 
$\vec{P}_i$ for the free part of $E_i(\Q)$  for each $i=1,\ldots, s$. Now, we choose a positive real number $R \le H^{1/2}$ to be specified later, and let  $\vec{W}(R)$  be the set of primes $p$ with $\rho_p\({\vec{P}_i}\) \le R$ for at least one $i =1, \ldots, s$, where $\rho_p\({\vec{P}_i}\) $ is defined in~\eqref{eqn:rE} with respect to the curve $E_i$. As before, by Lemma~\ref{lem: rp a.a. p}, we have 
$\sharp\, \vec{W}(R) \ll R^3/\log R$.

\subsection{Estimation of $\whD^*(H)$}
\label{eq:estim DH}
Write $t$ for the number of indices $i =1, \ldots, s$   for which $d_{P_i}$ has a prime divisor $p_i\notin \vec{W}(R)$,
and thus we have $s-t$ indices $i$ with $d_{P_i}$ having all prime divisors in $\vec{W}(R)$.
Without loss of generality, we may assume that the corresponding indices are $1,\ldots,t$, and $t+1,\ldots,s$, respectively.

Applying Lemma~\ref{lem:siegel}, we get that the number $K_1$ of such $(s-t)$-tuples $\(P_{t+1},\ldots,P_s\)$ of points is
\begin{equation}\label{eq:K1}
K_1\le H^{(r_{t+1}+\cdots+r_s-(s-t))/2}(\log H)^{s-t}\(\frac{R^3}{\log R}\)^{s-t}.
\end{equation}
Assume that such an $(s-t)$--tuple of points $\(P_{t+1},\ldots,P_s\)$ is fixed.

For each remaining $t$-tuples $\(P_1,\ldots,P_t\) \in E_1(\Q) \times \ldots \times E_t(\Q)$ of the corresponding canonical height at most $H$,  and for any $1\leq i\leq t$, there is a prime $p_i\notin  \vec{W}(R)$ such that $p_i\mid d_{P_i}$. Again, consider the graph $\cG$ on $t$ vertices  $1, \ldots, t$, and connect the vertices $i$ and $j$ if and only if  $\gcd(d_{P_i},d_{P_j})$ has a prime divisor outside $ \vec{W}(R)$. 
%(in the case of $X^*_{\Psf,\Qsf}(M,N)$ this condition is replaced by 
%$\nu_p\(x\(n_iP_i + Q_i\)\),\nu_p\(x\(n_jP_i + Q_j\)\) \ne 0$ for some $p \notin \cW(R)$).

Since $d_{P_1},\ldots,d_{P_s}$ are \md\ of
maximal rank, $\cG$ has no isolated vertex. Thus, by Lemma~\ref{lem:GraphCover}, there exists a
subset $\cI$ of $\{1,\ldots, t\}$ with
\[
m=\sharp\,  \cI\leq \fl{t/2}
\]
such that for any $j$ with
\[
j\in \{1,\ldots,t\}\setminus \cI,
\]
the vertex $j$ is connected with some $i$ in $\cG$, for some $i\in \cI$. Without loss of generality, let us write that $\cI=\{1,\ldots, m\}$. Trivially, the number $K_2$ of such $m$-tuples of points is
\[
K_2\ll H^{(r_1+\ldots + r_m)/2},
\]
see~\eqref{eq:Count height}. 

We now fix  such an  $m$-tuple of points. For $\ell = t-m$, we now count  the number $K_3$ of
the remaining $\ell $-tuples of points $(P_{m+1},\ldots,P_t)$. Since each $d_{P_j}$ with $m+1\leq j\leq t$ has a common prime factor $p\notin  \vec{W}(R)$ with $d_{P_i}$ for some $1\leq i\leq m$, by Lemma~\ref{lem:divbypoverallpoints}, we obtain that $P_j$ comes from a set $\cP_j$ of cardinality
\[
\sharp\,  \cP_j \ll H^{(r_j-1)/2}\(H^{1/2} /R + 1\) \ll H^{r_j/2} /R  
\]
since we have assumed that $R \le H^{1/2}$.

Thus we obtain
\[
K_3 \le \prod_{j=m+1}^t \sharp\,  \cP_j \ll  H^{(r_{m+1}+ \ldots + r_t)/2}/ R^{t-m}.
\] 

Therefore, recalling that $m\leq \fl{t/2}$, we have
\begin{equation}\label{eq:K2K3}
K_2K_3\ll H^{(r_1+ \ldots + r_t)/2}/R^{\rf{t/2}}.
\end{equation} 

Putting~\eqref{eq:K1} and~\eqref{eq:K2K3} together, we obtain
\[
\whD^*(H)\ll H^{(r_{1}+ \ldots + r_t)/2-(s-t)/2}R^{3s-7t/2}(\log H/\log R)^{s-t}.
\]
As in Section~\ref{sec:optim}, writing $R=H^{\eta}$, with $0\leq \eta\leq 1$ to be chosen, we need to minimize the exponent (excluding $o(1)$) above, over the range $1\leq t\leq s$. The exponent is
\[
(r_{1}+ \ldots + r_t)/2+s(3\eta-1/2)+t(1/2-7\eta/2)
\]
 and choosing $\eta=1/7$, we obtain the desired bound on $\whD^*(H)$.

\subsection{Estimation of $\whX^{*}(H)$}
As in the proof of Theorem~\ref{thm:MultDep-EC-MN-Q}, but also appealing to the arguments in Section~\ref{eq:estim DH}, in particular to full analogies of the bounds~\eqref{eq:K1} and~\eqref{eq:K2K3}.  We skip the details as they are 
easy to recover.

\section{Further questions}

First we observe that it is highly likely that  one can extend Theorems~\ref{thm:MultDep-EC-MN-Q} 
and~\ref{thm:MultDep-EC-MN-O} to number and function fields. 

Examining the proof of Theorem~\ref{thm:MultDep-EC-MN-O}, one can easily see that
it can be extended to $D_{\Psf,\Qsf}(M,N)$ where all components of $\Qsf$ are torsion points 
on corresponding elliptic curves. This is thanks to the generalisation of Lemma~\ref{lem:PrimDiv}
given by Verzobio~\cite{Ver20,Ver21,Ver23}. 
In fact for the bound on  $D^*_{\Psf,\Qsf}(M,N)$ we need only one component of $\Qsf$ to be a
torsion point.
We also note that  Lemma~\ref{lem:semigroup} can be extended into a much broader context 
of commutative rings. 

%%There are several other interesting open questions in this context of elliptic curves 
%%(even for $E_1 = \ldots = E_s$). For example, one can ask about \md\ of
%%\[
%%\(d_{P_1},  \ldots, d_{P_s}\) \mand \(x(P_1),  \ldots, x(P_s)\)
%%\]
%%where $P_1, \ldots, P_s$ run independently over points of height at most $H$ on 
%%the corresponding curves. 

Finally, partially motivated by the results of~\cite{HLS} and partially by our results, 
we ask  about an upper bound on  the number of $s$-tuples $(n_1,\ldots,n_s)$ with entries from an interval $(M,M+N]$
and 
such that the product $ d_{n_1P_1+Q_1} \cdots d_{n_sP_s+Q_s} $  is a perfect power, and similarly 
for $x(n_1P_1+Q_1) \cdots x(n_sP_s+Q_s)$.

Finally, inspired by~\cite[Corollary~1.2]{BarSha} and our theme of the results, we also ask  the following:

\begin{question}
Let $E$ be an elliptic curve over $\Q$ and let $P \in E(\Q)$ be a fixed non-torsion point. Assume that $\fsf = \(f_1, \ldots, f_s\) \in \Q(X,Y)^s$ are  $s$ multiplicatively independent, non-zero rational functions Then, can we estimate the following:
\begin{align*}
F^*_{\fsf,\Psf,\Qsf}(M,N)=\sharp\, \{ n&\in   (M,M+N]:\\
&\quad f_1(nP), \ldots, f_s(nP)\ \text{are \md\ of maximal rank} \}?
\end{align*}
\end{question}

\section*{Acknowledgement}

During this work,  A.B. and L.H. were  supported, in part, by the
NKFIH grants 130909 and 150284 and  S.B,, A.O.  and I.S.  by the Australian Research Council Grant  DP230100530. 

\appendix
\section{Multiplicative dependence of numerators}
\label{app:numer}

Here we merely develop some tools to enable us to extend our results to numerators 
of rational points on elliptic curves. We do this under a mild additional condition 
on the coefficients of the Weierstrass equation~\eqref{eq:Weier}.

We also have a variant of Lemma~\ref{lem:divbym-den} for numerators. 

\begin{lem}\label{lem:divbym-num}  Assume that $E$ is given by~\eqref{eq:Weier} with $b \ne 0$.
For a prime  $p$ and an integer  $k\ge 1$,  uniformly over $M\geq 0$, we have
\[
\sharp\,\{M<n\leq M+N :~p^k\mid a_{nP+Q}\}\ll \frac{N}{\sqrt{k \log p}}+ 1.
\]
\end{lem}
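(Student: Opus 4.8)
The plan is to convert the divisibility $p^k\mid a_{nP+Q}$ into a statement that the point $nP+Q$ lies in a very small piece of $E(\Z_p)$, and then to deduce the bound from Lemma~\ref{lem:divbym-den}. Assume first that $p$ is odd and $p\nmid b$; the finitely many remaining primes ($p=2$ and the prime divisors of the fixed integer $b$) only force the exponents below to be shifted by $O_E(1)$ and are dealt with identically at the end, being absorbed into the implied constant.

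\emph{Step 1 (reduction to a congruence).} Since $\gcd(a_{nP+Q}b_{nP+Q},d_{nP+Q})=1$ and $x(nP+Q)=a_{nP+Q}/d_{nP+Q}^2$, the condition $p^k\mid a_{nP+Q}$ is equivalent to $\nu_p(x(nP+Q))\ge k$, which then forces $p\nmid d_{nP+Q}$, so that $nP+Q$ is $p$-integral. Substituting $\nu_p(x(nP+Q))\ge k$ into~\eqref{eq:Weier} gives $\nu_p\bigl(y(nP+Q)^2-b\bigr)\ge k$. In particular $b$ is a square modulo $p$; if it is not, the count is $0$ and we are done. Otherwise fix, by Hensel's lemma, $\beta\in\Z_p^\times$ with $\beta^2=b$; factoring $(y-\beta)(y+\beta)\equiv 0\pmod{p^k}$ and using that $p$ is odd and $\beta$ a unit, exactly one of $y(nP+Q)\equiv\beta$ or $y(nP+Q)\equiv-\beta\pmod{p^k}$ holds.

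\emph{Step 2 (difference argument).} Split $\cN:=\{M<n\le M+N:\ p^k\mid a_{nP+Q}\}$ into two subsets according to which of $\pm\beta$ the value $y(nP+Q)$ matches modulo $p^k$. If $n\ne n'$ lie in the same subset, then $nP+Q$ and $n'P+Q$ have the same $x$- and $y$-coordinates modulo $p^k$; writing the chord slope for $(nP+Q)-(n'P+Q)=(n-n')P$ as $(y(nP+Q)+y(n'P+Q))/(x(nP+Q)-x(n'P+Q))$, the numerator is a $p$-adic unit (here $p$ odd and $\beta\in\Z_p^\times$ are used, and the case of equal $x$-coordinates would force $n=n'$) while the denominator has $\nu_p\ge k$, so the $x$-coordinate of $(n-n')P$ has $\nu_p\le -2k$, i.e.\ $p^k\mid d_{(n-n')P}$. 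Hence consecutive elements of each subset differ by at least $\rho:=\min\{m\ge 1:\ p^k\mid d_{mP}\}$, whence
\[
\#\cN\ \le\ 2\left(\frac{N}{\rho}+1\right).
\]

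\emph{Step 3 (lower bound for $\rho$).} Because the kernel of reduction is a subgroup, $\cD:=\{m\ge 1:\ p^k\mid d_{mP}\}$ is closed under multiplication by positive integers, so $\cD\supseteq\rho\Z_{>0}$ and therefore $\#\{0<m\le L:\ p^k\mid d_{mP}\}\ge\lfloor L/\rho\rfloor$ for every $L$. On the other hand, Lemma~\ref{lem:divbym-den}, applied with the point $P$, with $Q$ replaced by $O$, and with modulus $p^k$, bounds the same quantity by $O\bigl(L/\sqrt{\log p^k}+1\bigr)=O\bigl(L/\sqrt{k\log p}+1\bigr)$. Comparing and letting $L\to\infty$ gives $\rho\gg\sqrt{k\log p}$, and substituting into the display of Step 2 yields the claim. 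The main obstacle is Step 2: one has to verify carefully that agreement of $nP+Q$ and $n'P+Q$ modulo $p^k$ really places their difference that deep in the formal group at $p$, which is exactly where oddness of $p$ and $p\nmid b$ enter and why the few small primes must be quarantined; the rest is a routine transcription of the denominator estimate.
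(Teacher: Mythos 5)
Your proof is correct and follows essentially the same route as the paper's: reduce $p^k \mid a_{nP+Q}$ to the congruence $y(nP+Q)^2 \equiv b \pmod{p^k}$, note that $y(nP+Q)$ is confined to $O(1)$ residue classes, and show that two indices in the same class must be spaced apart by at least the index of apparition of $p^k$ in $(d_{mP})$, which is $\gg \sqrt{k\log p}$. The only (cosmetic) differences are that the paper gets the spacing bound directly from Lemma~\ref{lem:height} rather than through Lemma~\ref{lem:divbym-den}, and treats all primes uniformly by assuming $\nu_p(4b)<k$ (writing $y_1+y_2=2y_2+(y_1-y_2)$) instead of your Hensel argument plus quarantining of $p=2$ and $p\mid b$.
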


\begin{proof} Note that if $a_{nP+Q}\equiv 0 \mod {p^k}$, then from the Weierstrass equation~\eqref{eq:Weier}  we conclude that 
\[
y(nP+Q)^2   \equiv  b \mod {p^k}.
\]
 Note that since $p\mid a_{nP+Q}$ 
 we have $p\nmid d_{nP+Q}$ and thus $y(nP+Q)$ is well defined modulo $p^k$.
Since $b$ is fixed it is easy to show that there are at most $C$ possible values of $y(nP+Q)$ modulo $p^k$ where $C$ depends only on $b \ne 0$ (one can also simply use a much more general result of Huxley~\cite{Hux}). Let $T$ be the cardinality in the statement that we want to bound. If $T \le  C + 1$ there is nothing to prove. Otherwise we partition the interval $(M, M+N]$ into  $L = \rf{T/(C+1)} - 1$ semi-open  intervals of the shape $(u, u+h]$ 
 of equal length $h = N/L$. Clearly one such interval has to contain at least $C+1$ values of $n$ with $p^k\mid a_{nP+Q}$.   
 
However, since there are at most $C$ values of $y(nP+Q)$ modulo $p^k$, then there is an interval $(u, u+h] \subseteq (M, M+N]$ containing two  
integers $n_1<n_2$ with
\begin{equation}
\label{eq:x,y mod pk}
\begin{split}
& x(n_1P+Q)\equiv x(n_2P+Q) \equiv 0  \mod {p^k},   \\
& y(n_1P+Q)\equiv y(n_2P+Q)  \mod {p^k}. 
\end{split}
\end{equation}
Then we have 
\begin{equation}
\label{eq: nu_p(d)}
\nu_p\(d_{(n_2-n_1)P + Q}\)\gg k.
\end{equation}
To prove this claim, note that by the standard formula of addition of points, we have
\[x((n_1-n_2)P)=\frac{(y_2+y_1)^2-(x_1+x_2)(x_1-x_2)^2}{(x_1-x_2)^2},
\]
where we write $n_1P+Q=(x_1,y_1)$ and $n_2P+Q=(x_2,y_2)$.   
Now, we may consider $p^k$ sufficiently large such that $\nu_p(4b)<k$, since otherwise the result follows.
Using the Weierstrass equation~\eqref{eq:Weier}, the first congruence of~\eqref{eq:x,y mod pk} and standard properties of valuations, we obtain  that $\nu_p(2y_i)<k/2$, $i=1,2$. This, together with the second congruence of~\eqref{eq:x,y mod pk} (writing $y_1+y_2 - (y_1-y_2) = 2y_2$), implies that 
 $\nu_p(y_1+y_2)<k/2$, which proves~\eqref{eq: nu_p(d)} as $b \neq 0$ is fixed, and $\nu_p(x_1-x_2)\geq k$.

Using the bound~\eqref{leq:height-num}  we see from~\eqref{eq: nu_p(d)} that 
\[
k  \log p \ll (n_2-n_1)^2 \le h^2 \le (N/L)^2, 
\]
and the result follows
\end{proof}

Next we consider the set
\[
\cV_{P,Q}(M, N;\cS)=\{M<n\leq M+N:~  a_{nP+Q}\in \langle \cS\rangle\}.
\]

We note that 
\[
\frac{\log |a_{nP + Q}|}{\log d_{nP + Q}^2} \to 1
\]
as $n \to \infty$, see~\cite[Section~IX.3]{Silv-Book}, hence by Lemma~\ref{lem:height}, 
or any    fixed points $P,Q \in E(\Q)$, we have  
\begin{equation}
\label{leq:height-num} 
\log \(|a_{nP + Q}|\)=  \(\h(P) + o(1)\) n^2,  
\end{equation}
as $n \to \infty$. 

Using the bound~\eqref{leq:height-num} and Lemma~\ref{lem:divbym-num} 
(instead of   Lemma~\ref{lem:height} and Lemma~\ref{lem:divbym-den}, respectively), 
we immediately establish an analogue  of Lemma~\ref{lem:sqx}. 

\begin{lem}\label{lem:sqx-num} Let $E$ be given by~\eqref{eq:Weier} with $b \ne 0$. 
For any finite  set $\cS$ of primes of cardinality $S = \sharp\,  \cS$, we have  
\[
\sharp\,  \cV_{P,Q}(M,N,\cS)\ll  \left(1+\frac{N}{M}\right)^2 S.
\]
 \end{lem} 

As before, we see hat  Lemma~\ref{lem:sqx-num}, applies

\begin{cor}
\label{cor:sqx-num}
Let $E$ be given by~\eqref{eq:Weier} with $b \ne 0$. 
For any finite  set $\cS$ of primes of cardinality $S = \sharp\,  \cS$, we have  
\[
\sharp\,  \cV_{P,Q}(0, N;\cS) \ll  S \log  N.
\] 
\end{cor}

\end{document}